\numberwithin{equation}{section}
\newtheorem{theorem}{Theorem}[section]
\newtheorem{lemma}[theorem]{Lemma}
\newtheorem{proposition}[theorem]{Proposition}
\newcommand\N{\mathbb{N}}
\newcommand{\inner}[2]{\langle#1,#2\rangle}
\newcommand{\Inner}[2]{\left\langle#1,#2\right\rangle}
\newcommand\norm[1]{\|#1\|}
\newcommand\set[1]{\{#1\}}
\newcommand\Set[1]{\left\{#1\right\}}
\newcommand{\tos}{\rightrightarrows}
\newcommand{\tow}{\xrightarrow{w}}  
\newcommand{\tows}{\xrightarrow{w*}}
\newcommand{\br}{\overline{\mathbb{R}}}
\newcommand{\vgap}{\vspace{.1in}}
\title{A weakly convergent fully inexact Douglas-Rachford method with
  relative error tolerance}
\author{Benar F. Svaiter\thanks{IMPA, Estrada Dona Castorina 110,
    22460--320 Rio de Janeiro, Brazil ({\texttt{benar@impa.br}}) 
    tel: 55 (21) 25295112, fax: 55 (21)25124115.}\hspace{.5em}
  \thanks{Partially supported by CNPq
    grant 306247/2015--1
    and by 
    FAPERJ grant Cientistas de Nosso Estado
    E-26/203.318/2017 
  }}
\begin{document}
\maketitle

\begin{abstract}
  Douglas-Rachford method is a splitting algorithm for finding a zero
  of the sum of two maximal monotone operators. Each of its iterations
  requires the sequential solution of two proximal subproblems. The
  aim of this work is to present a fully inexact version of
  Douglas-Rachford method wherein both proximal subproblems are solved
  approximately within a relative error tolerance. We also present a
  semi-inexact variant in which the first subproblem is solved exactly
  and the second one inexactly. We prove that both methods generate
  sequences weakly convergent to the solution of the underlying
  inclusion problem, if any.
 
  \bigskip

  \bigskip

  \noindent
  2000 Mathematics Subject Classification: 49M27, 47H05, 65G99, 65K05,
  49J45

  \bigskip

  \noindent
  Keywords: Douglas–Rachford method, relative error, weak convergence

  \bigskip

\end{abstract}

\section{Introduction}
\label{sec:int}

Douglas-Rachford method~\cite{MR0084194}, originally proposed for
solving discretized heat equations, was extended by Lions and
Mercier~\cite{MR0551319} for finding a zero of the sum of two maximal
monotone operators.  This method is, presently, the subject of intense
research due to its efficiency, its use for solving PDEs, large-scale
optimization problems (even some non-convex ones), imaging problems,
and its connection with the alternating direction method
(see~\cite{gabay,MR2858834,MR3134441,MR3260973,MR3260382,MR3394965,MR3587821,MR3516863,MR3535928}
and the references therein).

Eckstein and Bertseka~\cite{MR1168183} proved that Douglas-Rachford
method can be regarded as an instance of the proximal point method
applied to an implicitly defined operator.  Recently, Eckstein and
Yao~\cite{EckYao} cleverly used this result to propose an inexact
version of Douglas-Rachford method with relative error tolerance based
on Solodov and Svaiter hybrid proximal-extragradient
method~\cite{hpe1,hpe2,hpe-unified,hpe-breg}.  Each iteration of
Douglas-Rachford method requires the sequential solution of two
proximal subproblems.  Eckstein-Yao algorithm is a semi-inexact
version of this method in the sense that it allows for inexactness on
the first proximal subproblem and requires the second one to be solved
exactly.  Complexity of Eckstein-Yao inexact Douglas-Rachford method
was derived by Marques Alves and Geremia~\cite{AlvGer}.

The main contribution of this work is the introduction and analysis of
a fully inexact version of Douglas-Rachford method wherein both
proximal subproblems are solved approximately within a relative error
tolerance.  To the best of our knowledge, this is the first fully
inexact version of Douglas-Rachford method with a relative error
tolerance.  We also propose a semi-inexact version for the cases where
one of the proximal subproblems can be solved exactly.  In the
semi-inexact case of Douglas-Rachford, our proposal is to solve the
first subproblem exactly and the second one inexactly, so that the
error criterion to be satisfied is immediately available (during the
computation of the approximate solution of the second subproblem).

Douglas-Rachford method for solving the inclusion problem
\begin{align*}
  0 \in A(x)+B(x),
\end{align*}
where $A$ and $B$ are maximal monotone operators in a Hilbert space
$H$, can be written as: choose $x_0,b_0\in H$ and for $k=1,2,\dots$
\begin{equation*}
\begin{alignedat}{3}
  &\text{compute }y_k,\;a_k\in H
  &\;\;\text{such that}\quad&a_k\in A(y_k),
  &\quad&\lambda a_k+y_k=x_{k-1}-\lambda b_{k-1},\\  
  &\text{compute }x_k,\;b_k\in H
  &\text{such that}\quad&b_k\in B(x_k),
  &&\lambda b_k+x_k=y_k+\lambda b_{k-1}.
\end{alignedat}
\end{equation*}
Since $x_k=x_{k-1}-\lambda(a_k+b_k)$ and
$b_k=b_{k-1}-\lambda^{-1}(x_k-y_k)$, we propose the following (fully)
inexact version of Douglas-Rachford method: choose $z_0,w_0\in H$ and
for $k=1,2,\dots$
\begin{align*}
\begin{aligned}
  &\text{compute }y_k,\;a_k\in H\text{ and }x_k,\;b_k\in H
  \text{ such that:}
  \\[.2em]
&\begin{alignedat}{3}
  &y_k,a_k \;\;\text{is an approximate solution of }&\;\;&a\in A(y),
  \;\; \lambda a+y={z}_{k-1}-\lambda {w}_{k-1},\\  
  &x_k,b_k\;\;\text{is an approximate solution of }&&b\in B(x),
  \;\;\lambda b+x=y_k+\lambda {w}_{k-1},
\end{alignedat}
  \\[.2em]
  &\text{set }{z}_k={z}_{k-1}-(1-t_k)\lambda (a_k+b_k),\;
  {w}_k={w}_{k-1}-(1-t_k)\lambda^{-1}(x_k-y_k),
\end{aligned}
\end{align*}
where $t_k$ is an under-relaxation parameter which is zero whenever it
is ``safe'' to do so (more of which latter).  If both subproblems are
exactly solved and $t_k=1$, at the $k$-th iteration of this generic
method, then $z_k=x_k$ and $w_k=b_k$.
Following~\cite{hpe2,hpe-unified}, we will allow errors in the
inclusions \emph{and} in the equations in the computation of $y_k,a_k$
and $x_k,b_k$.  Errors in the inclusion will be considered using the
$\varepsilon$-enlargement~\cite{bis} of the maximal monotone operator
$B$.

Since our semi-inexact method is related to~\cite{EckYao}, it is worth
discussing their differences.  Eckstein-Yao semi-inexact version of
Douglas-Rachford method~\cite{EckYao} does not require the
introduction of a relaxation factor, as ours do, so that their inexact
version is formally closer than ours to the exact method.  Their
version allows for errors in the second subproblem, while our
semi-inexact version allows for errors in the second one.  Weak
convergence, in infinite dimensional spaces, of their version is an
interesting open question, while we prove here weak convergence of our
version.  Their version has a very good practical
performance~\cite[Section 7]{EckYao} and nice complexity
properties~\cite{AlvGer}, while the practical performance and
complexity properties of our version are, as of now, open questions.
Weak convergence on Douglas-Rachford method was established by the
author in~\cite{MR2783211} for the inexact version with the summable error
tolerance. Here we use the techniques and ideas of that work to prove
weak convergence under a relative error tolerance.

This work is organized as follows.  In Section \ref{sec:bd} we
establish the notation and prove some basic results.  In Section
\ref{sec:idr} we the present a fully inexact Douglas-Rachford method
and analyze its convergence properties.  In Section \ref{sec:sidr} we
present a semi-inexact Douglas-Rachford method and analyze its
convergence properties.  In Appendices \ref{sec:comp} and
\ref{sec:tech} we prove some technical results.

\section{Basic definitions and results}
\label{sec:bd}

From now on $H$ is a real Hilbert space with inner product
$\inner{\cdot}{\cdot}$ and induced norm
$\norm{x}=\sqrt{\inner{x}{x}}$.  In $H \times H$ we consider the
canonical inner product and associated norm of Hilbert spaces
products.

We are concerned with the inclusion problem
\begin{align}
  \label{eq:p}
  0 \in A(x)+B(x)
\end{align}
where $A:H\tos H$ and $B:H\tos H$ are maximal monotone operators.  The
\emph{extended solution set}~\cite{EckSva} of this problem is
\begin{align}
  \label{eq:se}
  S_e(A,B)=\set{(z,w)\;:\;w\in B(z),\, -w\in A(x)}.
\end{align}
It is trivial to verify that
\begin{align*}
  0 \in A(z)+B(z) \iff
  (z,w)\in S_e(A,B)\text{ for some }w\in H.
\end{align*}

The $\varepsilon$-enlargement~\cite{bis} of a maximal monotone
operator $T:H\tos H$ is defined as
\begin{align}
  \label{eq:teps}
  T^{[\varepsilon]}(z)=\set{v\in H\;:\;
  \inner{z-z'}{v-v'}\geq -\varepsilon
  \;\; \forall z'\in H,\;v'\in T(z')
  }
  \qquad (\varepsilon\geq0,\;\; z\in H).
\end{align}
The following elementary properties of the $\varepsilon$-enlargement
will be used in the sequel.
\begin{proposition}
  \label{pr:teps}
  If $T:H\tos H$ is maximal monotone, then
  \begin{enumerate}
  \item\label{it:0teps} $T^{[\varepsilon=0]}=T$;
  \item\label{it:lteps}
    $\lambda(T^{[\varepsilon]})=(\lambda T)^{[\lambda\varepsilon]}$
    for any $\varepsilon\geq 0$ and $\lambda>0$.
  \end{enumerate}
\end{proposition}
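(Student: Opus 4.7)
\medskip

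My plan is to verify each item directly from the definition \eqref{eq:teps}.

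For item \ref{it:0teps}, specializing \eqref{eq:teps} with $\varepsilon=0$ gives
\[
T^{[0]}(z)=\Set{v\in H\,:\,\inner{z-z'}{v-v'}\geq 0\;\;\forall z'\in H,\;v'\in T(z')}.
\]
Plainly $T(z)\subseteq T^{[0]}(z)$ because $T$ is monotone. For the reverse inclusion, if $v\in T^{[0]}(z)$ then the pair $(z,v)$ is monotonically related to every point of $\Graph T$, so by maximal monotonicity of $T$ one concludes $v\in T(z)$. Hence $T^{[0]}=T$. The only thing to invoke here is the characterization of maximal monotonicity, nothing else is required.

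For item \ref{it:lteps}, I would simply unwind both sides. By definition, $v\in \lambda(T^{[\varepsilon]})(z)$ iff $v/\lambda\in T^{[\varepsilon]}(z)$, i.e.
\[
\Inner{z-z'}{\tfrac{v}{\lambda}-v'}\geq -\varepsilon\qquad\forall z'\in H,\;v'\in T(z').
\]
Multiplying through by the positive scalar $\lambda$ yields
\[
\inner{z-z'}{v-\lambda v'}\geq -\lambda\varepsilon\qquad\forall z'\in H,\;v'\in T(z').
\]
Since the substitution $u'=\lambda v'$ is a bijection between $T(z')$ and $(\lambda T)(z')$, the last inequality is equivalent to
\[
\inner{z-z'}{v-u'}\geq -\lambda\varepsilon\qquad\forall z'\in H,\;u'\in(\lambda T)(z'),
\]
which is precisely the condition $v\in(\lambda T)^{[\lambda\varepsilon]}(z)$. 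This gives the desired equality of multifunctions.

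There is really no obstacle in either step: item \ref{it:0teps} is a one-line consequence of maximality, and item \ref{it:lteps} is a routine rescaling argument. The only care needed is to keep track of the direction of the inequality when multiplying by $\lambda$ (harmless since $\lambda>0$) and to note that $v'\mapsto \lambda v'$ bijects $T(z')$ onto $(\lambda T)(z')$ so that quantifiers transform correctly.
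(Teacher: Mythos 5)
Your proof is correct and follows exactly the route the paper intends: item \ref{it:0teps} via the monotone-relatedness characterization of maximal monotonicity, and item \ref{it:lteps} by unwinding the definition and rescaling by the positive factor $\lambda$. The paper merely asserts these as trivial/direct consequences of the definition, so your write-up is simply a fleshed-out version of the same argument.
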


\begin{proof}
  Item~\ref{it:0teps} follows trivially from~\eqref{eq:teps} and the
  maximal monotonicity of $T$ while item~\ref{it:lteps} follows
  directly from \eqref{eq:teps}.
\end{proof}

In each iteration of Douglas-Rachford method, one shall compute
$(I+\lambda T)^{-1}\zeta$, first with $T=A$ and
$\zeta=x_{k-1}-\lambda b_{k-1}$ and them with $T=B$ and
$\zeta= y_k+\lambda b_{k-1}$.  Let $T$ be a maximal monotone operator
in $H$.  Computation of $z=(I+\lambda T)^{-1}\zeta$ can be decoupled
in an inclusion and an equation, which we call the \emph{proximal
  inclusion-equation system}:
\begin{align}
  \label{eq:psys}
  v\in T(z),\qquad \lambda v+z-\zeta=0.
\end{align}
If we allow errors in the inclusion, by means of the
$\varepsilon$-enlargement of $T$, and error in the equation we get
\begin{align*}
  v\in T^{[\varepsilon]}(z),\qquad
  \lambda v+z-\zeta=r
\end{align*}
where $\varepsilon$ is the \emph{error in the inclusion} and $r$ is
the \emph{residual in the equation} at \eqref{eq:psys}.  In some
sense, $\norm{r}^2+2\lambda\varepsilon$ quantifies the overall error
in the inexact solution of \eqref{eq:psys}, due to the next result
proved in~\cite{error-bound}.
\begin{proposition}
  [\mbox{\cite[Corollary 1]{error-bound}}]
  \label{pr:eb}
  Suppose $T:H\tos H$ is maximal monotone, $\lambda<0$,
  and $\zeta\in H$. If
  \begin{align*}
    v^*\in T(z^*),\quad \lambda v^*+z^*-\zeta=0,
    \qquad
    v\in T^{[\varepsilon]}(z),\quad \lambda v+z-\zeta=r
  \end{align*}
  then
  $\norm{\lambda(v^*-v)}^2+\norm{z^*-z}^2 \leq
  \norm{r}^2+2\lambda\varepsilon$.
\end{proposition}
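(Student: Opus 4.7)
The plan is straightforward: combine the definition of the $\varepsilon$-enlargement with a squared-norm expansion of the residual equation. I would start from the two equations $\lambda v^* + z^* - \zeta = 0$ and $\lambda v + z - \zeta = r$ and subtract them to obtain the identity
\[
  \lambda(v-v^*) + (z-z^*) = r.
\]
Taking the squared Hilbert norm of both sides and expanding the cross term yields
\[
  \norm{\lambda(v-v^*)}^2 + \norm{z-z^*}^2 + 2\lambda\inner{v-v^*}{z-z^*} = \norm{r}^2,
\]
so the target inequality reduces to showing $-2\lambda\inner{v-v^*}{z-z^*} \leq 2\lambda\varepsilon$.

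The key (and only nontrivial) ingredient is the enlargement bound. Since $v^*\in T(z^*)$ and $T=T^{[0]}$ by Proposition~\ref{pr:teps}(\ref{it:0teps}), the defining inequality~\eqref{eq:teps} applied to $v\in T^{[\varepsilon]}(z)$ with the test pair $(z',v')=(z^*,v^*)$ gives $\inner{z-z^*}{v-v^*}\geq -\varepsilon$. Assuming $\lambda>0$ (the stated $\lambda<0$ in the proposition must be a typo, as otherwise the right-hand side would be negative while the left-hand side is nonnegative), multiplying by $-2\lambda$ flips the sign and yields $-2\lambda\inner{v-v^*}{z-z^*}\leq 2\lambda\varepsilon$, which, combined with the identity above, gives exactly the claimed bound.

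There is no real obstacle here; the argument is a one-line computation once one recognizes that the residual equation encodes the decomposition of $r$ into its $T$-component $\lambda(v-v^*)$ and its $z$-component $z-z^*$, and that the enlargement inequality controls precisely the cross term that arises upon squaring. The only thing to be careful about is the sign convention on $\lambda$ and writing the enlargement inequality with the pair $(z,v)\in\Graph T^{[\varepsilon]}$ playing the role of the ``approximate'' side and the exact pair $(z^*,v^*)\in\Graph T$ playing the role of the test point in~\eqref{eq:teps}.
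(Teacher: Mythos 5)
Your proof is correct and is essentially the argument behind the cited result: the paper itself does not prove Proposition~\ref{pr:eb} but only quotes it from \cite{error-bound}, where the proof is exactly this combination of the squared-norm expansion of $\lambda(v-v^*)+(z-z^*)=r$ with the enlargement inequality $\inner{z-z^*}{v-v^*}\geq-\varepsilon$ obtained by testing $v\in T^{[\varepsilon]}(z)$ against the exact pair $(z^*,v^*)\in\Graph T$. You are also right that the hypothesis $\lambda<0$ is a typo for $\lambda>0$, consistent with the rest of the paper.
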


The next lemma will be instrumental in the convergence analysis of the
inexact methods we propose in this work.

\begin{lemma}
  \label{lm:bas}
  Let $z,w\in X$ and $\lambda>0$. Suppose
  \begin{alignat*}{2}
    \lambda a+y & = z-\lambda w+r,\quad  & a&\in A^{[\varepsilon]}(x),\\
    \lambda b+x & = y+\lambda w+s,       & b&\in B^{[\mu]}(y),
  \end{alignat*}
  and define $z'=z-\lambda(a+b)$, $w'=w-\lambda^{-1}(x-y)$.
  For any $(z^*,w^*)\in S_e(A,B)$,
  \begin{multline*}
    \norm{(z^*,\lambda w^*)-(z,\lambda w)}^2
    \geq \norm{(z^*,\lambda w^*)-(z',\lambda w')}^2
    +\norm{\lambda (a+w)}^2\\
    -[\norm{s}^2 +2\lambda(\inner{r}{a+b}+\varepsilon+\mu)].
    \qquad
  \end{multline*}
\end{lemma}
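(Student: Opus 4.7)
The plan is to obtain the claimed inequality by combining the defining inequalities of $A^{[\varepsilon]}$ and $B^{[\mu]}$ with respect to the pair $(z^*,w^*)\in S_e(A,B)$, then using the two inexact proximal equations to convert the resulting inner products into a difference of squared norms in the product space $H\times H$.

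First, since $(z^*,w^*)\in S_e(A,B)$ means $-w^*\in A(z^*)$ and $w^*\in B(z^*)$, applying the definition of the $\varepsilon$-enlargement (equation~\eqref{eq:teps}) to $a\in A^{[\varepsilon]}(y)$ against $(z^*,-w^*)$, and to $b\in B^{[\mu]}(x)$ against $(z^*,w^*)$, gives
\begin{align*}
\inner{y-z^*}{a+w^*}\geq -\varepsilon,\qquad \inner{x-z^*}{b-w^*}\geq -\mu.
\end{align*}
Multiplying by $2\lambda>0$ and adding yields a single scalar inequality whose right-hand side is $-2\lambda(\varepsilon+\mu)$.

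Second, I will eliminate $y$ and $x$ using the two equations. Rearranging the first gives $\lambda(a+w)=z-y+r$, i.e.\ $y-z^*=(z-z^*)-\lambda(a+w)+r$; subtracting the first from the second and combining yields $\lambda(b-w)=y-x+s$, hence $x-z^*=(z-z^*)-\lambda(a+b)+r+s$. Plugging these into the combined enlargement inequality expresses the left-hand side in terms of $z-z^*$, $w^*-w$, the residuals $r,s$, and $a,b$, $w$.

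Third, on the target side I will expand the squared-norm difference using the three-point identity
\begin{align*}
\norm{U^*-U}^2-\norm{U^*-U'}^2 = 2\inner{U^*-U'}{U'-U}+\norm{U'-U}^2
\end{align*}
with $U=(z,\lambda w)$, $U'=(z',\lambda w')$, $U^*=(z^*,\lambda w^*)$, and the explicit formulas $z'-z=-\lambda(a+b)$ and $\lambda(w'-w)=y-x$. Matching the outcome with $2\lambda$ times the combined enlargement inequality should leave exactly the advertised extras: the progress term $\norm{\lambda(a+w)}^2=\norm{z-y+r}^2$ on the positive side, and the error terms $\norm{s}^2+2\lambda\inner{r}{a+b}$ on the negative side.

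The main obstacle is the bookkeeping of inner products in the matching step, since many cross-terms involving $w^*-w$, $r$, and $s$ appear on both sides and must cancel precisely. The cleanest way I expect to manage this is to introduce the auxiliary vectors $p:=a+w$ and $q:=b-w$, for which the proximal equations read simply $\lambda p=z-y+r$ and $\lambda q=y-x+s$, with $p+q=a+b$; rewriting both sides of the intended identity in the variables $(p,q,z-z^*,w^*-w,r,s)$ should reduce the verification to a short algebraic identity modulo the two monotonicity inequalities.
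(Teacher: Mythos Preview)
Your proposal is correct and follows essentially the same route as the paper: combine the two enlargement inequalities for $(z^*,\pm w^*)$ with the three-point expansion of the squared-norm difference in $H\times H$, then verify that the leftover algebra yields exactly $\norm{\lambda(a+w)}^2-\norm{s}^2-2\lambda\inner{r}{a+b}$. The paper organizes the bookkeeping a bit differently---it first reduces to $\lambda=1$, isolates the cross term $\pi=\inner{z^*-z'}{z'-z}+\inner{w^*-w'}{w'-w}$, and then observes that its lower bound is independent of $w^*$, which lets it substitute $b$ for $w^*$ to collapse several terms---whereas your substitution $p=a+w$, $q=b-w$ (so that $\lambda p=z-y+r$, $\lambda q=y-x+s$, $p+q=a+b$) plays the same simplifying role and leads to the same identity.
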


\begin{proof}
  In view of item \ref{it:lteps} on Proposition~\ref{pr:teps}, it
  suffices to prove the lemma for the case $\lambda=1$, which we
  assume now.  In this case
  \begin{align*}
    a+y=z-w+r,\;\; b+x=y+w+s,\;\;z'=z-(a+b),\;\;w'=w-(x-y).
  \end{align*}

  Fix $(z^*,w^*)\in S_e(A,B)$ and let
  \begin{align*}
    \pi=\inner{z^*-z'}{z'-z}+\inner{w^*-w'}{w'-w}.
  \end{align*}
  In view of definition~\eqref{eq:se}, $w^*\in B(z^*)$ and
  $-w^*\in A(z^*)$.  It follows from these inclusions, the inclusions
  $a\in A^{[\varepsilon]}(y)$ and $b\in B^{[\mu]}(x)$, and
  definition~\eqref{eq:teps}, that
  \begin{align*}
    \inner{z^*-z'}{z'-z}
    &=\inner{z^*-z'}{w^*-b}+\inner{z^*-z'}{-w^*-a}
    \\
    &=\inner{z^*-x}{w^*-b}+\inner{x-z'}{w^*-b}
      +\inner{z^*-y}{-w^*-a}+\inner{y-z'}{-w^*-a}
    \\
    &\geq
     \inner{x-z'}{w^*-b}+\inner{y-z'}{-w^*-a}-\varepsilon-\mu
      \,.
  \end{align*}
  Direct combination of this inequality with the definitions of $\pi$
  and $w'$ yields
  \begin{equation*}
    \pi
    \geq
    \inner{x-z'}{w^*-b}+\inner{y-z'}{-w^*-a}-\varepsilon-\mu
    +\inner{w^*-w'}{y-x}.
  \end{equation*}  
  Since the expression at the right hand-side of the above inequality
  does not depend on $w^*$, we can substitute $b$ for $w^*$ is this
  expression and use the definitions of $z'$ and $w'$ to conclude that
  \begin{align*}
    \pi
    & \geq
      \inner{y-z'}{-b-a}+\inner{b-w'}{y-x}
      -\varepsilon-\mu
    \\
    &=\inner{y-x+r+s}{-a-b}+
      \inner{s}{y-x}-\varepsilon-\mu
    \\
    &=\inner{x-y}{a+b}-[
      \inner{r}{a+b}+\inner{s}{a+b+x-y}+
      \varepsilon+\mu].
  \end{align*}
  Therefore,
     \begin{align*}
     \norm{(z^*, w^*)-(z, w)}^2
     &=
       \norm{(z^*, w^*)-(z', w')}^2
       + \norm{(z', w')-(z, w)}^2
       +2\pi
       \\
     &\geq
       \norm{(z^*, w^*)-(z', w')}^2
       +\norm{a+b}^2+\norm{x-y}^2
     \\
     &\qquad \mbox{}
       +2\inner{x-y}{a+b}
       -2[\inner{r}{a+b}+\inner{s}{a+b+y-x}+\varepsilon+\mu]
     \\
     & =
       \norm{(z^*, w^*)-(z', w')}^2
       +\norm{a+b+x-y}^2
     \\
     &\qquad \mbox{}
         -2[\inner{r}{a+b}+\inner{s}{a+b+y-x}+\varepsilon+\mu].
   \end{align*}
   To end the proof, observe that
  \begin{equation*}
    \norm{a+b+x-y}^2-2\inner{s}{a+b+x-y}
    =\norm{a+b+x-y-s}^2-\norm{s}^2=\norm{a+w}^2-\norm{s}^2
  \end{equation*}
  and combine the two above equations.
\end{proof}

\section{A fully inexact Douglas-Rachford method
  with relative error tolerance}
\label{sec:idr}

In this section we present an Inexact Douglas-Rachford method wherein
both proximal subproblems are to be solved within a relative error
tolerance.  We prove that the sequences generated by this method
converge weakly to a point in $S_e(A,B)$, whenever the solution set of
\eqref{eq:p} is non-empty.

\vgap
\vgap

\noindent
\fbox{
\begin{minipage}[h]{6.4 in}
{\bf Algorithm I: inexact DR method with relative error tolerance}.
\begin{itemize}
\item[(0)] Let $z_0,w_0\in H$, $\lambda>0$, $0<\sigma<\nu<1$.
  For $k=1,2,3,\dots$, 
\item[(1)] find $a_k,y_k,\varepsilon_k,
  b_k,x_k,\mu_k$   such that 
  \begin{align}
    \label{eq:s1}
    \begin{split}
    &
    a_k\in A^{[\varepsilon_k]}(y_k),
    \quad   b_k\in B^{[\mu_k]}(x_k),
    \\
    &
    \begin{aligned}\norm{\lambda a_k+y_k-(z_{k-1}-\lambda w_{k-1})}^2
      +\norm{\lambda b_k+x_k-(y_k+\lambda w_{k-1})}^2
      &
      \\
      +2\lambda(\varepsilon_k+\mu_k)
      &
      \leq
      \dfrac{\sigma^2}{4}
      (\norm{a_k+b_k}^2+\norm{x_k-y_k}^2);
    \end{aligned} 
  \end{split}
  \end{align}
\item [(2)] set
  \begin{align}
    \label{eq:s2}
    \begin{split}
      &\delta_k=\norm{\lambda a_k+y_k-(z_{k-1}-\lambda w_{k-1})}^2
      +\norm{\lambda b_k+x_k-(y_k+\lambda w_{k-1})}^2
      +2\lambda(\varepsilon_k+\mu_k),
      \\
      &\rho_k=\norm{\lambda(a_k+b_k)}^2+\norm{x_k-y_k}^2,
      \\
      &t_k=
      \begin{cases}
        0 & \text{if }a_k+b_k=
        x_k-y_k=0\\
        \nu\max\Set{0,
          \sqrt{\dfrac{4\delta_k}{\sigma^2\rho_k}}
          -\dfrac{\norm{\lambda(a_k+w_{k-1})}^2}{\rho_k}
        }&\text{otherwise},
      \end{cases}
      \\[.6em]
      &    z_k=z_{k-1}-(1-t_k)\lambda(a_k+b_k),\quad
      w_k=w_{k-1}-(1-t_k)\lambda^{-1}(x_k-y_k);    
    \end{split}
  \end{align}
\end{itemize}
\noindent
\end{minipage}
}

\vgap
\vgap

We did not specify how to compute $a_k$, $y_k$, $\varepsilon_k$
,$b_k$, $x_k$, and $\mu_k$, which adds generality to the method.  In
Appendix~\ref{sec:comp} we show that under some mild conditions (on
$A$ and $B$) step (2) is computable.  Whenever $t_k=0$,
$z_k=z_{k-1}-\lambda(a_k+b_k)$ and
$w_k=w_{k-1}-\lambda^{-1} (x_k-y_k)$, so that formally we retrieve a
Douglas-Rachford iteration.  In those iterations where $t_k\neq 0$, we
will have an under-relaxed Douglas-Rachford iteration.

To simplify the convergence analysis, let $r_k$ and $s_k$ denote the
residuals in the equations of the proximal inclusion-equations systems
to be solved for $A$ and $B$ at the $k$-th iteration, that is,
\begin{align}
  \label{eq:rsk}
  \begin{aligned}
  r_k&=\lambda a_k+y_k-(z_{k-1}-\lambda w_{k-1}),
  \quad
  s_k=\lambda b_k+x_k-(y_k+\lambda w_{k-1}),
\end{aligned}
  \qquad  (k=1,2,\dots).
\end{align}
With this notation, \eqref{eq:s1} and the first line of \eqref{eq:s2}
writes
\begin{align}
  \label{eq:syn}
  \begin{split}
    \begin{aligned}
      &\begin{alignedat}{2}
        a_k&\in A^{[\varepsilon_k]}(y_k),
          \quad &\lambda a_k+y_k&=z_{k-1}-\lambda w_{k-1}+r_k,
          \\
          b_k&\in B^{[\mu_k]}(x_k),
          &\lambda b_k+x_k&=y_k+\lambda w_{k-1}+s_k,
        \end{alignedat}
        \\
        &\delta_k=\norm{r_k}^2+\norm{s_k}^2
        +2\lambda(\varepsilon_k+\mu_k),
        \qquad\delta_k\leq\dfrac{\sigma^2}{4}\rho_k.
      \end{aligned}
    \end{split}
\end{align}
It follows from the definition of $t_k$ at \eqref{eq:s2} that for all
$k$,

\begin{alignat}{1}
  \label{eq:boundt}
  0\leq t_k\leq \nu<1,
  \qquad \dfrac{\nu}{\sigma}\sqrt{4\delta_k\rho_k}
  -\nu\norm{\lambda(a_k+w_{k-1})}^2
  \leq t_k\rho_k
  ,\qquad
  t_k^2\rho_k\leq\dfrac{4\nu^2}{\sigma^2}\delta_k.
\end{alignat}

From now on in this section,
\begin{align}
  \label{eq:pk}
  \qquad p_k
    =(z_k,\lambda w_k),\qquad
     \qquad \qquad\qquad  (k=0,1,2,\dots).
\end{align}
First we will prove that the sequence $\set{p_k}$ converges F\'ejer to
the set of points $(z,\lambda w)$ where $(z,w) \in S_e(A,B)$.

\begin{lemma}
  \label{lm:fej}
  For any $({z},{w})\in S_e(A,B)$ and all $k$
  \begin{align*}
    \norm{({z},\lambda {w})-p_{k-1}}^2
    & \geq \norm{({z},\lambda {w})-p_k}^2
    +(1-t_k)
    \left\{\dfrac{\nu-\sigma}{\sigma}\sqrt{4\delta_k\rho_k}
    +
    (1-\nu)\norm{\lambda(a_k+w_{k-1})}^2
    \right\},
    \\
    \norm{({z},\lambda {w})-p_0}^2
    & \geq \norm{({z},\lambda {w})-p_k}^2
    +\sum_{i=1}^k(1-t_i)
    \left\{\dfrac{\nu-\sigma}{\sigma}\sqrt{4\delta_i\rho_i}
    +
    (1-\nu)\norm{\lambda(a_i+w_{i-1})}^2
    \right\}.
  \end{align*}
\end{lemma}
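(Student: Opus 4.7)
The plan is to reduce each iteration to a single application of Lemma~\ref{lm:bas}, then absorb the under-relaxation through a convex-combination identity together with the lower bound on $t_k\rho_k$ recorded in \eqref{eq:boundt}. First I introduce the ``unrelaxed'' point $\tilde p_k = (\tilde z_k, \lambda \tilde w_k)$ with $\tilde z_k = z_{k-1} - \lambda(a_k+b_k)$ and $\tilde w_k = w_{k-1} - \lambda^{-1}(x_k-y_k)$; the update in \eqref{eq:s2} then reads $p_k = t_k p_{k-1} + (1-t_k)\tilde p_k$, and a direct computation gives $\norm{p_{k-1}-\tilde p_k}^2 = \rho_k$. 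The identity for a squared norm under a convex combination yields, for $q = (z,\lambda w)$,
\[
\norm{q-p_k}^2 = t_k\norm{q-p_{k-1}}^2 + (1-t_k)\norm{q-\tilde p_k}^2 - t_k(1-t_k)\rho_k.
\]

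Next I apply Lemma~\ref{lm:bas} with the data $(r_k,s_k,\varepsilon_k,\mu_k,a_k,b_k,x_k,y_k,z_{k-1},w_{k-1})$ of iteration $k$, which bounds $\norm{q-\tilde p_k}^2$ above by $\norm{q-p_{k-1}}^2 - \norm{\lambda(a_k+w_{k-1})}^2 + E_k$, where
\[
E_k \;:=\; \norm{s_k}^2 + 2\lambda\bigl(\inner{r_k}{a_k+b_k} + \varepsilon_k + \mu_k\bigr).
\]
Substituting into the displayed identity and rearranging produces
\[
\norm{q-p_{k-1}}^2 - \norm{q-p_k}^2 \;\geq\; (1-t_k)\bigl[\,\norm{\lambda(a_k+w_{k-1})}^2 + t_k\rho_k - E_k\,\bigr].
\]

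The only delicate step is the bound $E_k \leq \sqrt{4\delta_k\rho_k}$. Starting from the weighted Young inequality $2\inner{r_k}{\lambda(a_k+b_k)} \leq \alpha\norm{r_k}^2 + \norm{\lambda(a_k+b_k)}^2/\alpha$ (valid for every $\alpha>0$), I get $E_k \leq \norm{s_k}^2 + \alpha\norm{r_k}^2 + \norm{\lambda(a_k+b_k)}^2/\alpha + 2\lambda(\varepsilon_k+\mu_k)$. For $\alpha\geq 1$ the first, second and fourth terms are together bounded by $\alpha\delta_k$, while $\norm{\lambda(a_k+b_k)}^2/\alpha \leq \rho_k/\alpha$, so $E_k \leq \alpha\delta_k + \rho_k/\alpha$ for every $\alpha\geq 1$. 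The relative-error condition $\delta_k\leq\sigma^2\rho_k/4 \leq \rho_k$ makes $\alpha = \sqrt{\rho_k/\delta_k}$ admissible (the degenerate cases $\delta_k=0$ and $\rho_k=0$ are handled by direct inspection, noting that $\rho_k = 0$ forces $\delta_k=0$ and also $\lambda(a_k+w_{k-1})=0$), and this choice gives $E_k \leq 2\sqrt{\delta_k\rho_k} = \sqrt{4\delta_k\rho_k}$.

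Finally I substitute this estimate, together with the lower bound $t_k\rho_k \geq (\nu/\sigma)\sqrt{4\delta_k\rho_k} - \nu\norm{\lambda(a_k+w_{k-1})}^2$ from \eqref{eq:boundt}, into the rearranged inequality above. The $\norm{\lambda(a_k+w_{k-1})}^2$ contributions combine to $(1-\nu)\norm{\lambda(a_k+w_{k-1})}^2$, while the $\sqrt{4\delta_k\rho_k}$ contributions combine to $\tfrac{\nu-\sigma}{\sigma}\sqrt{4\delta_k\rho_k}$, which establishes the first displayed inequality of the lemma. The second displayed inequality is then immediate by telescoping the first from $i=1$ to $k$, since both summand terms are nonnegative (given $\sigma<\nu<1$).
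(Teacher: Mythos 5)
Your proof is correct and follows essentially the same route as the paper: apply Lemma~\ref{lm:bas} to the unrelaxed point, use the convex-combination identity for $p_k=t_kp_{k-1}+(1-t_k)p'_k$ with $\norm{p_{k-1}-p'_k}^2=\rho_k$, bound the error term by $2\sqrt{\delta_k\rho_k}$, and finish with the lower bound on $t_k\rho_k$ from \eqref{eq:boundt}. The only local difference is that you get $E_k\leq 2\sqrt{\delta_k\rho_k}$ via a weighted Young inequality with the optimized weight $\alpha=\sqrt{\rho_k/\delta_k}$, whereas the paper maximizes the concave quadratic $\delta_k-u^2+2u\sqrt{\rho_k}$ over $u=\norm{r_k}\in[0,\sqrt{\delta_k}]$; both yield the same constant.
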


\begin{proof}
  Fix $({z},{w})\in S_e(A,B)$ and let $p^*=({z},\lambda {w})$.
  Define, for $k=1,2,\dots$,
  \begin{align}
    \label{eq:p'k}
    z'_k=z_{k-1}-\lambda(a_k+b_k),
    \quad
    w'_k=w_{k-1}-\lambda^{-1}(x_k-y_k),
    \quad
    p'_k=(z'_k,\lambda w'_k).
  \end{align}
  It follows from \eqref{eq:pk}, \eqref{eq:syn} and Lemma~\ref{lm:bas}
  that
  \begin{align*}
    \norm{p^*-p_{k-1}}^2
    &\geq \norm{p^*-p'_k}^2+\norm{\lambda(a_k+w_{k-1})}^2
      -[\norm{s_k}^2+2\lambda(\inner{r_k}{a_k+b_k}+\varepsilon_k+\mu_k)]
      .
  \end{align*}
  Using Cauchy-Schwarz inequality, the definition of $\rho_k$ in
  \eqref{eq:s2}, and the third line on \eqref{eq:syn} we conclude that
  \begin{align*}
    \norm{s_k}^2+2(\inner{r_k}{a_k+b_k}+\varepsilon_k+\mu_k)
    & \leq
      \norm{s_k}^2+2\lambda(\varepsilon_k+\mu_k)
      +2\norm{r_k}\sqrt{\rho_k}\\
    &=\delta_k-\norm{r_k}^2+2\norm{r_k}
      \sqrt{\rho_k}
  \end{align*}
  and $\norm{r_k}\leq \sqrt{\delta_k}\leq\sigma\sqrt{\rho_k}/2$.
  Since the expression on the right hand-side of the above equality is
  increasing for $\norm{r_k}\leq\sqrt{\rho_k}$, its maximum value on
  $[0,\sqrt{\delta_k}]$ is attained at $\norm{r_k}=\sqrt{\delta_k}$.
  Combining these observations with the two above inequalities we
  conclude that
  \begin{align*}
    \norm{p^*-p_{k-1}}^2
    &\geq \norm{p^*-p'_k}^2+\norm{\lambda(a_k+w_{k-1})}^2
      -2\sqrt{\delta_k\rho_k}.
  \end{align*}
   Since $p_k=t_kp_{k-1}+(1-t_k)p'_k$ and $\norm{p_k-p'_k}^2=\rho_k$,
   \begin{align*}
     \norm{p^*-p_{k-1}}^2
     &=t_k\norm{p^*-p_{k-1}}+(1-t_k)\norm{p^*-p_{k-1}}^2
     \\
     &
       \geq t_k\norm{p^*-p_{k-1}}+
       (1-t_k)\left[
       \norm{p^*-p'_k}^2+\norm{\lambda(a_k+w_{k-1})}^2
       -2\sqrt{\delta_k\rho_k}\right]
     \\
     &
       =
       \norm{t_k(p^*-p_{k-1})+(1-t_k)(p^*-p'_k)}^2+
      (1-t_k)t_k\norm{p_{k-1}-p'_k}^2\\
     &\quad\mbox{}+
       (1-t_k)\left[\norm{\lambda(a_k+w_{k-1})}^2
       -2\sqrt{\delta_k\rho_k}\right]
     \\
     &
       =
       \norm{p^*-p_k}+
       (1-t_k)\left[t_k\rho_k+\norm{\lambda(a_k+w_{k-1})}^2
       -2\sqrt{\delta_k\rho_k}\right]
       .
   \end{align*}
   To end the proof of the first inequality, use the above inequality
   and the next to last inequality in \eqref{eq:boundt}.  The second
   inequality of the lemma follows trivially from the first one.
 \end{proof} 

 \begin{lemma}
   \label{lm:prec}
   If $S_e(A,B)$ is nonempty, then
   \begin{enumerate}
   \item \label{it:bd}
     $\set{z_k}$ and $\set{w_k}$ are bounded;
   \item \label{it:rsem}
     $r_k\to 0$, $s_k\to 0$, $\varepsilon_k\to 0$ and $\mu_k \to 0$ 
     as $k\to \infty$;
   \item \label{it:ay}
     $a_k+w_{k-1}\to 0$ and $y_k-z_{k-1}\to 0$ as $k\to \infty$;
   \item \label{it:bx}
     $b_k-w_k \to 0$ and $x_k-z_k \to 0$  as $k\to \infty$.
   \end{enumerate}
 \end{lemma}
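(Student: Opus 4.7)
The plan is to extract everything from the Fejér-type inequality in Lemma~\ref{lm:fej}. Fix any $(z,w)\in S_e(A,B)$, set $p^*=(z,\lambda w)$, and note that under the standing assumption $0<\sigma<\nu<1$ the bracketed summand in Lemma~\ref{lm:fej} is nonnegative and, by the bound $t_k\leq\nu<1$ from \eqref{eq:boundt}, is multiplied by a factor $\geq 1-\nu>0$. Hence $\{\norm{p^*-p_k}^2\}$ is nonincreasing and bounded below, so it converges; in particular $\{p_k\}$ is bounded, which yields item~\ref{it:bd}.

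Convergence of $\{\norm{p^*-p_k}^2\}$ forces the telescoped series in the second inequality of Lemma~\ref{lm:fej} to be finite, so each nonnegative term goes to zero:
\begin{align*}
  (1-t_k)\sqrt{\delta_k\rho_k}\to 0,\qquad (1-t_k)\norm{\lambda(a_k+w_{k-1})}^2\to 0.
\end{align*}
Dividing by $1-t_k\geq 1-\nu>0$ gives $\sqrt{\delta_k\rho_k}\to 0$ and $\lambda(a_k+w_{k-1})\to 0$. The key observation for item~\ref{it:rsem} is that the relative-error condition $\delta_k\leq(\sigma^2/4)\rho_k$ from the last line of \eqref{eq:syn} implies $\rho_k\geq(4/\sigma^2)\delta_k$, whence $\sqrt{\delta_k\rho_k}\geq(2/\sigma)\delta_k$. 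Therefore $\delta_k\to 0$, and since $\delta_k=\norm{r_k}^2+\norm{s_k}^2+2\lambda(\varepsilon_k+\mu_k)$ with all summands nonnegative, item~\ref{it:rsem} follows.

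Item~\ref{it:ay} is immediate from the identities in \eqref{eq:syn}: the first equation gives $y_k-z_{k-1}=r_k-\lambda(a_k+w_{k-1})$, which tends to zero by item~\ref{it:rsem} and the already-established limit $\lambda(a_k+w_{k-1})\to 0$; dividing the latter by $\lambda>0$ gives $a_k+w_{k-1}\to 0$.

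For item~\ref{it:bx} the obstacle is the residual $t_k\neq 0$ iterations, where $z_k$ and $w_k$ are under-relaxed rather than equal to $z'_k,w'_k$ from \eqref{eq:p'k}. Here the last estimate in \eqref{eq:boundt}, $t_k^2\rho_k\leq(4\nu^2/\sigma^2)\delta_k$, combined with $\delta_k\to 0$ forces $t_k\norm{\lambda(a_k+b_k)}\to 0$ and $t_k\norm{x_k-y_k}\to 0$. Then a direct calculation using the second equation in \eqref{eq:syn} and the update rule for $w_k$ gives $b_k-w_k=t_k\lambda^{-1}(y_k-x_k)+\lambda^{-1}s_k$, which tends to zero. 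Adding the two equations in \eqref{eq:syn} and combining with the update rule for $z_k$ yields $x_k-z_k=-t_k\lambda(a_k+b_k)+r_k+s_k$, which likewise tends to zero, completing the proof.
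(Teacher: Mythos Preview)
Your proof is correct and follows essentially the same route as the paper's: both extract boundedness and the summability of $\sqrt{\delta_k\rho_k}$ and $\norm{\lambda(a_k+w_{k-1})}^2$ from Lemma~\ref{lm:fej}, use $\delta_k\le(\sigma^2/4)\rho_k$ to get $\delta_k\to 0$, and finish item~\ref{it:bx} via the identities relating $z_k-x_k$, $w_k-b_k$ to $t_k\lambda(a_k+b_k)$, $t_k(x_k-y_k)$ together with $t_k^2\rho_k\le(4\nu^2/\sigma^2)\delta_k\to 0$. Your explicit formulas $b_k-w_k=t_k\lambda^{-1}(y_k-x_k)+\lambda^{-1}s_k$ and $x_k-z_k=-t_k\lambda(a_k+b_k)+r_k+s_k$ are in fact cleaner than the paper's display, which drops a factor of~$\lambda$.
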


 \begin{proof}
 Take $({z},{w})$ in $S_e(A,B)$. It follows from Lemma~\ref{lm:fej} that
 $\set{(z_k,\lambda w_k)}$ is bounded, which proves item~\ref{it:bd}, and
 that
 \begin{align*}
   \sum_{k=1}^\infty\sqrt{\delta_k\rho_k} < \infty,\qquad
   \sum_{k=1}^\infty \norm{a_k+w_{i-1}}^2 <\infty.
 \end{align*}
 Since $\delta_k\leq\sigma^2\rho_k/4$,
 $\sqrt{\delta_k\rho_k}\geq 2 \delta_k/\sigma$ and it follows from the
 first above inequality that $\delta_k\to 0$ as $k\to \infty$. This
 result, together with the third line of \eqref{eq:syn} proves item
 \ref{it:rsem}.  The first limit in item~\ref{it:ay} follows trivially
 form the second above inequality while the second limit follows from
 the first one, the first limit in item~\ref{it:rsem} and the
 definition of $r_k$ in \eqref{eq:rsk}.

 It follows from the update formula for $z_k$ and $w_k$, and from
 \eqref{eq:syn} that
 \begin{align*}
   z_k-x_k+r_k+s_k=t_k\lambda(a_k+b_k),\qquad
   w_k-b_k+s_k=t_k(x_k-y_k).
 \end{align*}
 Squaring both sides of each one of the above equations and adding them
 we conclude that
 \begin{align*}
   \norm{z_k-x_k+r_k+s_k}^2+\norm{w_k-b_k+s_k}^2
   =t_k^2\rho_k.
 \end{align*}
 Since $\delta_k\to 0$ as $k\to \infty$, it follows from the above
 equations and the last inequality in~\eqref{eq:boundt} that 
 \begin{align*}
   t_k^2\rho_k\to 0,\quad   z_k-x_k+r_k+s_k\to 0,\quad
   w_k-b_k+s_k\to 0\qquad \text{as }k\to \infty.
 \end{align*}
 Item \ref{it:bx} follows from the above result and from item~\ref{it:rsem}.
\end{proof}

\begin{theorem}
  If $S_e(A,B)\neq \emptyset$, then $\set{(x_k,b_k)}$,
  $\set{(y_k,-a_k)}$ and $\set{(z_k,w_k)}$ converge weakly to a point
  in this set.
\end{theorem}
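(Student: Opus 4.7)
My plan is the Fej\'er--Opial strategy in $H\times H$ applied to $p_k:=(z_k,\lambda w_k)$. By Lemma~\ref{lm:fej}, $\{p_k\}$ is Fej\'er monotone relative to $\tilde S:=\{(z,\lambda w):(z,w)\in S_e(A,B)\}$; hence it is bounded and $\lim_k\|p^*-p_k\|$ exists for every $p^*\in\tilde S$. A standard Opial argument then reduces the task to showing that any weak cluster point of $\{p_k\}$ lies in $\tilde S$: once this is proved, $(z_k,w_k)\tow(\bar z,\bar w)\in S_e(A,B)$, and the strong convergences in Lemma~\ref{lm:prec}(\ref{it:ay})--(\ref{it:bx}) immediately propagate this weak limit to $(x_k,b_k)\tow(\bar z,\bar w)$ and $(y_k,-a_k)\tow(\bar z,\bar w)$.

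Fix a weak cluster point $(\bar z,\bar w)$ of $\{(z_k,w_k)\}$ and a subsequence $(k_j)$ with $(z_{k_j},w_{k_j})\tow(\bar z,\bar w)$. The plan is to run two demiclosedness arguments at suitably shifted indices. For $B$, Lemma~\ref{lm:prec}(\ref{it:bx}) gives $(x_{k_j},b_{k_j})\tow(\bar z,\bar w)$ with $b_{k_j}\in B^{[\mu_{k_j}]}(x_{k_j})$, $\mu_{k_j}\to 0$. For $A$, Lemma~\ref{lm:prec}(\ref{it:ay}) applied at the shifted index $k_j+1$ gives $(y_{k_j+1},a_{k_j+1})\tow(\bar z,-\bar w)$ with $a_{k_j+1}\in A^{[\varepsilon_{k_j+1}]}(y_{k_j+1})$, $\varepsilon_{k_j+1}\to 0$.

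The main obstacle is that both limits are purely weak, so classical weak-strong graph closedness does not apply; I will bridge this via the Fitzpatrick function $\phi_T(z,v):=\sup_{(z',v')\in\Graph(T)}[\inner{z}{v'}+\inner{z'}{v}-\inner{z'}{v'}]$, which for maximal monotone $T$ satisfies $\phi_T(z,v)\geq\inner{z}{v}$ with equality characterizing $\Graph(T)$. The transport inequalities defining $A^{[\varepsilon_{k_j+1}]}$ and $B^{[\mu_{k_j}]}$, after taking $\liminf$ over the weakly convergent bounded terms, yield
\[
\liminf_{j}\inner{x_{k_j}}{b_{k_j}}\geq\phi_B(\bar z,\bar w)\geq\inner{\bar z}{\bar w},\qquad
\liminf_{j}\inner{y_{k_j+1}}{a_{k_j+1}}\geq\phi_A(\bar z,-\bar w)\geq-\inner{\bar z}{\bar w}.
\]
The matching upper bound comes from the cross-term identity
\[
\inner{x_k}{b_k}+\inner{y_{k+1}}{a_{k+1}}=\inner{z_k}{w_k}-\inner{z_k}{w_k}+o(1)=o(1),
\]
obtained by expanding each inner product around $(z_k,\pm w_k)$ using the strong convergences $x_k-z_k\to 0$, $b_k-w_k\to 0$, $y_{k+1}-z_k\to 0$, $a_{k+1}+w_k\to 0$ of Lemma~\ref{lm:prec}(\ref{it:ay})--(\ref{it:bx}) together with the boundedness of $\{(z_k,w_k)\}$. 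Taking $\liminf$ of this sum and using subadditivity $\liminf\alpha_j+\liminf\beta_j\leq\liminf(\alpha_j+\beta_j)=0$ squeezes both Fitzpatrick lower bounds into equalities, giving $\bar w\in B(\bar z)$ and $-\bar w\in A(\bar z)$. Hence $(\bar z,\bar w)\in S_e(A,B)$, and Opial's lemma yields weak convergence of $\{p_k\}$, completing the proof.
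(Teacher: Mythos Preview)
Your proof is correct and follows essentially the same route as the paper: Fej\'er monotonicity of $p_k=(z_k,\lambda w_k)$ from Lemma~\ref{lm:fej}, identification of weak cluster points via a Fitzpatrick-function squeeze, then Opial and Lemma~\ref{lm:prec}(\ref{it:ay})--(\ref{it:bx}) to transfer the limit. The only cosmetic differences are that the paper packages your Fitzpatrick argument into the technical Lemma~\ref{lm:limit} (applied with $m=2$, the strong limits $x_{k_j}-y_{k_j+1}\to 0$ and $b_{k_j}+a_{k_j+1}\to 0$) rather than spelling it out inline, and that you shift the $A$-index forward to $k_j+1$ whereas the paper shifts backward; your choice is in fact the cleaner one, since $y_{k+1}-z_k\to 0$ and $a_{k+1}+w_k\to 0$ link directly to the cluster point of $(z_{k_j},w_{k_j})$.
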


\begin{proof}
  Suppose a subsequence $\set{(z_{k_n},w_{k_n})}$ converges weakly to
  some $(z,w)$.  It follows from Lemma~\ref{lm:prec} that
  \begin{alignat*}{4}
    &x_{k_n}\tow  z, &\quad& y_{k_n-1}\tow z,\quad
    &&x_{k_n}-y_{k_n-1}\to 0,
    \\
    &b_{k_n}\tow w, && a_{k_n-1}\tow -w,&\quad& a_{k_n-1}+b_{k_n}\to 0,
    &
    \\
    &\mu_{k_n}\to 0,&&\varepsilon_{k_n-1}\to 0,&&
    &\qquad \text{as }k\to \infty.
  \end{alignat*}
  Since $b_{k_n}\in B^{[\mu_{k_n}]}(x_{k_n})$,
  $a_{k_n-1}\in A^{[\varepsilon_{k_n-1}]}(y_{k_n-1})$
  for $n=1,2,\dots$, it follows from Lemma~\ref{lm:limit} that
  $w\in B(z)$ and $-w\in A(w)$. Therefore, $(z,w)\in S_e(A,B)$.

  We have proved that all weak limit points of $\set{(z_k,w_k)}$ are
  in $S_e(A,B)$.  Therefore, all weak limit points of
  $\set{p_k=(z_k,\lambda w_k)}$ are in $\Omega$,
  \begin{align*}
    \Omega=\set{(z,\lambda w)\;:\;(z,w)\in S_e(A,B)}.
  \end{align*}
  In Lemma~\ref{lm:fej} we proved that $\set{p_k}$ is Fej\'er
  convergent to $\Omega$.  Since $\Omega\neq\emptyset$, it follows
  from these results and from Opial's Lemma~\cite{opial} that the
  bounded sequence $\set{p_k}$ has a unique weak limit point and such
  a point belongs to $\Omega$.  Therefore, $\set{p_k}$ converges
  weakly to a point in $(z,\lambda w)$ where $(z,w)\in S_e(A,B)$.  To
  end the proof, use items~\ref{it:ay} and \ref{it:bx} of
  Lemma~\ref{lm:prec}.
\end{proof}

\section{A semi-inexact Douglas-Rachford method}
\label{sec:sidr}

In this section we present an inexact Douglas-Rachford method wherein,
in each iteration, the first proximal subproblem is to be exactly
solved and the second proximal subproblem is to be solved within a
relative error tolerance.

A possible advantage of solving the second subproblem approximately,
instead of the first one, is that the error criterion to be satisfied
is readily available during the computation of the second step,
thereby obviating the necessity of computing more than once the
approximate solution per iteration.  Since one of the subproblems is
to be solved exactly, following~\cite{EckYao}, we call this method
semi-inexact.

\vgap
\vgap

\noindent
\fbox{
  \begin{minipage}[h]{6.4 in} {\bf Algorithm II: A semi-inexact
      Douglas-Rachford method with relative error tolerance}.
\begin{itemize}
\item[(0)] Let $z_0,w_0\in H$, $\lambda>0$, $0<\sigma<\nu<1$.
  For $k=1,2,3,\dots$,
\item[(1)] compute $a_k,y_k$
  such that 
  \begin{subequations}
    \begin{align}
      \label{eq:s1a}
      &a_k\in A(y_k),\quad
        \lambda a_k + y_k = z_{k-1} - \lambda w_{k-1},
        \intertext{compute  $b_k,x_k,\mu_k$
        such that}
        \label{eq:s1b}
      &b_k\in B^{[\mu_k]}(x_k),\;
        \norm{\lambda b_k+x_k-(y_k+\lambda w_{k-1})}^2
        +2\lambda\mu_k\leq  \sigma^2
        (\norm{a_k+b_k}^2
        +\norm{x_k-y_k}^2);
  \end{align}
\end{subequations}
\item [(2)] set
  \begin{align}
    \label{eq:s2.si}
    \begin{split}
      &\delta_k=
      \norm{\lambda b_k+x_k-(y_k+\lambda w_{k-1})}^2
      +2\lambda \mu_k,
      \\
      &\rho_k=\norm{\lambda(a_k+b_k)}^2+\norm{x_k-y_k}^2,
      \\
      &t_k=
      \begin{cases}
        0 & \text{if }a_k+b_k=
        x_k-y_k=0\\
        \nu^2\max\Set{0,
          \dfrac{\delta_k}{\sigma^2\rho_k}
          -\dfrac{\norm{\lambda(a_k+w_{k-1})}^2}{\rho_k}
        }&\text{otherwise},
      \end{cases}
      \\[.6em]
      &    z_k=z_{k-1}-(1-t_k)\lambda(a_k+b_k),\quad
      w_k=w_{k-1}-(1-t_k)\lambda^{-1}(x_k-y_k);    
    \end{split}
  \end{align}
\end{itemize}
\noindent
\end{minipage}
}

\vgap
\vgap

From now on in this section, $\set{z_k}$, $\set{w_k}$, $\set{a_k}$
etc. are sequences generated by Algorithm {\bf II}.  To simplify the
convergence analysis, let $s_k$ denote the residuals in the equation
of the inclusion-equation system to be solved for $B$ at the $k$-th
iteration, that is,
\begin{align}
  \label{eq:rsk.si}
  s_k=\lambda b_k+x_k-(y_k+\lambda w_{k-1}),
  \qquad\qquad (k=1,2,\dots).
\end{align}
With this notation, \eqref{eq:s1a}, \eqref{eq:s1b} and the first line
of \eqref{eq:s2.si} writes
\begin{align}
  \label{eq:syn.si}
  \begin{split}
    \begin{aligned}
      &\begin{alignedat}{2}
        a_k&\in A(y_k),
          &\lambda a_k+y_k&=z_{k-1}-\lambda w_{k-1},
          \\
          b_k&\in B^{[\mu_k]}(x_k),
          \;\;\;
          &\lambda b_k+x_k&=y_k+\lambda w_{k-1}+s_k,
        \end{alignedat}
        \\
        &\delta_k=\norm{s_k}^2
        +2\lambda\mu_k,
        \qquad\delta_k\leq\sigma^2\rho_k.
      \end{aligned}
    \end{split}
\end{align}
It follows from the definition of $t_k$ at \eqref{eq:s2.si} that for
all $k$,
\begin{alignat}{1}
  \label{eq:boundt.si}
  0\leq t_k\leq \nu^2<1,
  \qquad \dfrac{\nu^2}{\sigma^2}\delta_k-\nu^2\norm{\lambda(a_k+w_{k-1})}^2
  \leq t_k\rho_k
  ,\qquad t_k^2\rho_k \leq \dfrac{\nu^4}{\sigma^2}\delta_k.
\end{alignat}

From now on in this section,
\begin{align}
  \label{eq:pk.si}
  p_k
    =(z_k,\lambda w_k),\qquad\qquad
  (k=0,1,2,\dots).
\end{align}
We will prove that the sequence $\set{p_k}$ converges to a point
$(z,\lambda w)$ where $(z,w) \in S_e(A,B)$.

\begin{lemma}
  \label{lm:fej.si}
  For any $(z^*,w^*)\in S_e(A,B)$ and all $k$
  \begin{align*}
    \norm{(z^*,\lambda w^*)-p_{k-1}}^2
    & \geq \norm{(z^*,\lambda w^*)-p_k}^2
    +(1-t_k)
    \left\{\dfrac{\nu^2-\sigma^2}{\sigma^2}\delta_k
    +
    (1-\nu^2)\norm{\lambda(a_k+w_{k-1})}^2
    \right\},
    \\
    \norm{(z^*,\lambda w^*)-p_0}^2
    & \geq \norm{(z^*,\lambda w^*)-p_k}^2
    +\sum_{i=1}^k(1-t_i)
    \left\{\dfrac{\nu^2-\sigma^2}{\sigma^2}\delta_i
    +
    (1-\nu^2)\norm{\lambda(a_i+w_{i-1})}^2
    \right\}.
  \end{align*}
\end{lemma}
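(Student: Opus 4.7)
The plan is to mirror the proof of Lemma~\ref{lm:fej}, taking advantage of the fact that the $A$-subproblem is now solved exactly ($r_k=0$ and $\varepsilon_k=0$), which eliminates the Cauchy--Schwarz step needed in the fully inexact case. Fix $(z^*,w^*)\in S_e(A,B)$, set $p^*=(z^*,\lambda w^*)$, and define, as in Lemma~\ref{lm:fej},
\[
  z'_k=z_{k-1}-\lambda(a_k+b_k),\quad w'_k=w_{k-1}-\lambda^{-1}(x_k-y_k),\quad p'_k=(z'_k,\lambda w'_k).
\]

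First, I would apply Lemma~\ref{lm:bas} to the system \eqref{eq:syn.si} with $r=0$, $\varepsilon=0$, $s=s_k$, $\mu=\mu_k$. Because the subtracted quantity in the conclusion of Lemma~\ref{lm:bas} collapses to $\|s_k\|^2+2\lambda\mu_k=\delta_k$, the bound becomes simply
\[
  \|p^*-p_{k-1}\|^2\;\geq\;\|p^*-p'_k\|^2+\|\lambda(a_k+w_{k-1})\|^2-\delta_k.
\]

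Next I would combine this with the convexity identity arising from $p_k=t_kp_{k-1}+(1-t_k)p'_k$ and $\|p_{k-1}-p'_k\|^2=\rho_k$, namely
\[
  t_k\|p^*-p_{k-1}\|^2+(1-t_k)\|p^*-p'_k\|^2=\|p^*-p_k\|^2+t_k(1-t_k)\rho_k.
\]
Splitting $\|p^*-p_{k-1}\|^2=t_k\|p^*-p_{k-1}\|^2+(1-t_k)\|p^*-p_{k-1}\|^2$ and bounding the last term by the previous display yields
\[
  \|p^*-p_{k-1}\|^2\;\geq\;\|p^*-p_k\|^2+(1-t_k)\bigl[t_k\rho_k+\|\lambda(a_k+w_{k-1})\|^2-\delta_k\bigr].
\]

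Finally, the middle inequality of \eqref{eq:boundt.si}, $t_k\rho_k\geq \frac{\nu^2}{\sigma^2}\delta_k-\nu^2\|\lambda(a_k+w_{k-1})\|^2$, shows that the bracketed quantity is at least $\frac{\nu^2-\sigma^2}{\sigma^2}\delta_k+(1-\nu^2)\|\lambda(a_k+w_{k-1})\|^2$, giving the first inequality. The second inequality follows by telescoping, using $\sigma<\nu<1$ to guarantee nonnegativity of every summand.

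The step that demands the most care is the middle one: verifying that the definition of $t_k$ in \eqref{eq:s2.si} (with $\nu^2$ and the ratio $\delta_k/(\sigma^2\rho_k)$ in place of $\nu$ and $\sqrt{4\delta_k/(\sigma^2\rho_k)}$) is calibrated precisely so that the bound on $t_k\rho_k$ absorbs the $-\delta_k$ term and still leaves a positive coefficient on $\|\lambda(a_k+w_{k-1})\|^2$. Beyond this bookkeeping, no new ideas beyond those in Lemma~\ref{lm:fej} are required.
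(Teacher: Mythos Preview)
Your proposal is correct and follows essentially the same route as the paper: apply Lemma~\ref{lm:bas} with $r=0$, $\varepsilon=0$ to obtain $\|p^*-p_{k-1}\|^2\geq\|p^*-p'_k\|^2+\|\lambda(a_k+w_{k-1})\|^2-\delta_k$, combine with the convex-combination identity for $p_k=t_kp_{k-1}+(1-t_k)p'_k$, and then invoke the middle inequality of \eqref{eq:boundt.si}. The only superfluous remark is that telescoping the first inequality into the second does not require nonnegativity of the summands; summing the one-step inequalities over $i=1,\dots,k$ already gives the second inequality directly.
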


\begin{proof}
  Fix $(z^*,w^*)\in S_e(A,B)$ and let $p^*=(z^*,\lambda w^*)$.
  Define, for $k=1,2,\dots$,
  \begin{align}
    \label{eq:p'k.si}
    z'_k=z_{k-1}-\lambda(a_k+b_k),
    \quad
    w'_k=w_{k-1}-\lambda^{-1}(x_k-y_k),
    \quad
    p'_k=(z'_k,\lambda w'_k).
  \end{align}
  It follows from \eqref{eq:pk.si}, \eqref{eq:syn.si} and
  Lemma~\ref{lm:bas} that
  \begin{align*}
    \norm{p^*-p_{k-1}}^2
    &\geq \norm{p^*-p'_k}^2+\norm{\lambda(a_k+w_{k-1})}^2
      -[\norm{s_k}^2+2\lambda\mu_k]
    \\
      &\geq \norm{p^*-p'_k}^2+\norm{\lambda(a_k+w_{k-1})}^2
      -\sigma^2\rho_k.
\end{align*}
   Since $p_k=t_kp_{k-1}+(1-t_k)p'_k$ and $\norm{p_k-p'_k}^2=\rho_k$,
   \begin{align*}
     \norm{p^*-p_{k-1}}^2
     &=t_k\norm{p^*-p_{k-1}}+(1-t_k)\norm{p^*-p_{k-1}}^2
     \\
     &
       \geq t_k\norm{p^*-p_{k-1}}+
       (1-t_k)\left[
       \norm{p^*-p'_k}^2+\norm{\lambda(a_k+w_{k-1})}^2
       -\delta_k\right]
     \\
     &
       =
       \norm{t_k(p^*-p_{k-1})+(1-t_k)(p^*-p'_k)}^2+
      (1-t_k)t_k\norm{p_{k-1}-p'_k}^2\\
     &\quad\mbox{}+
       (1-t_k)\left[\norm{\lambda(a_k+w_{k-1})}^2
       -\delta_k\right]
     \\
     &
       =
       \norm{p^*-p_k}+
       (1-t_k)\left[t_k\rho_k+\norm{\lambda(a_k+w_{k-1})}^2
       -\delta_k\right].
   \end{align*}
   To end the proof of the first inequality, use the above inequality
   and the next to last inequality in \eqref{eq:boundt.si}.

   The second inequality of the lemma follows trivially from the first
   one.
 \end{proof} 

 \begin{lemma}
   \label{lm:prec.si}
   If $S_e(A,B)$ is nonempty, then
   \begin{enumerate}
   \item \label{it:bd.si}
     $\set{z_k}$ and $\set{w_k}$ are bounded;
   \item \label{it:rsem.si}
     $s_k\to 0$ and $\mu_k \to 0$ 
     as $k\to \infty$;
   \item \label{it:ay.si}
     $a_k+w_{k-1}\to 0$ and $y_k-z_{k-1}\to 0$ as $k\to \infty$;
   \item \label{it:bx.si}
     $b_k-w_k \to 0$ and $x_k-z_k \to 0$  as $k\to \infty$.
   \end{enumerate}
 \end{lemma}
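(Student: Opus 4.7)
The plan is to mirror the proof of Lemma \ref{lm:prec}, exploiting the fact that the semi-inexact setting is simpler because $r_k\equiv 0$. I would fix $(z^*,w^*)\in S_e(A,B)$ and apply Lemma \ref{lm:fej.si} to deduce both the boundedness of $\{p_k=(z_k,\lambda w_k)\}$ (giving item~\ref{it:bd.si} immediately) and the summability
\[
  \sum_{k=1}^\infty (1-t_k)\Bigl[\tfrac{\nu^2-\sigma^2}{\sigma^2}\delta_k+(1-\nu^2)\norm{\lambda(a_k+w_{k-1})}^2\Bigr] < \infty.
\]
Since $t_k\leq \nu^2<1$ uniformly by \eqref{eq:boundt.si} and $\sigma<\nu$, the two non-negative series $\sum \delta_k$ and $\sum\norm{\lambda(a_k+w_{k-1})}^2$ are separately finite. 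In particular, $\delta_k\to 0$ and $a_k+w_{k-1}\to 0$.

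From the definition $\delta_k=\norm{s_k}^2+2\lambda\mu_k$ in \eqref{eq:syn.si}, with $\mu_k\ge 0$ and $\lambda>0$, both $s_k\to 0$ and $\mu_k\to 0$, which gives item~\ref{it:rsem.si}. For the remaining half of item~\ref{it:ay.si}, I would use the \emph{exact} equation for the first subproblem, $\lambda a_k+y_k=z_{k-1}-\lambda w_{k-1}$, which rearranges to $y_k-z_{k-1}=-\lambda(a_k+w_{k-1})$; the right-hand side vanishes by the limit just established.

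For item~\ref{it:bx.si} I would eliminate $y_k$ and $w_{k-1}$ from the two equations in \eqref{eq:syn.si} together with the update formulas in \eqref{eq:s2.si} to get the algebraic identities
\[
  z_k-x_k+s_k=t_k\lambda(a_k+b_k),\qquad \lambda(w_k-b_k)+s_k=t_k(x_k-y_k),
\]
which are the semi-inexact analogues (with $r_k=0$) of those appearing at the end of the proof of Lemma~\ref{lm:prec}. Squaring and adding yields
\[
  \norm{z_k-x_k+s_k}^2+\norm{\lambda(w_k-b_k)+s_k}^2 = t_k^2\rho_k,
\]
and the last bound in \eqref{eq:boundt.si} together with $\delta_k\to 0$ forces $t_k^2\rho_k\to 0$. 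Combined with $s_k\to 0$ from item~\ref{it:rsem.si}, this delivers $z_k-x_k\to 0$ and $w_k-b_k\to 0$.

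I do not foresee any real obstacle here; everything reduces to adapting the constants and dropping the $r_k$-terms from the proof of Lemma~\ref{lm:prec}. The only mild point of care is the derivation of the two identities in item~\ref{it:bx.si}, where one must be attentive to the factors of $\lambda$ coming from the update $w_k=w_{k-1}-(1-t_k)\lambda^{-1}(x_k-y_k)$, but this is purely mechanical.
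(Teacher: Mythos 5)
Your proof is correct and follows essentially the same route as the paper: extract summability of $\delta_k$ and of $\norm{\lambda(a_k+w_{k-1})}^2$ from Lemma~\ref{lm:fej.si}, then use the two algebraic identities relating $z_k-x_k$ and $w_k-b_k$ to $t_k^2\rho_k$. Your version of the second identity, $\lambda(w_k-b_k)+s_k=t_k(x_k-y_k)$, correctly carries the factor $\lambda$ that the paper's displayed formula omits, but this does not change the argument.
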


 \begin{proof}
   Take $(z^*,w^*)$ in $S_e(A,B)$. It follows from
   Lemma~\ref{lm:fej.si} that $\set{(z_k,\lambda w_k)}$ is bounded,
   which proves item~\ref{it:bd.si}, and that
 \begin{align*}
  \sum_{k=1}^\infty\delta_k < \infty,\qquad
  \sum_{k=1}^\infty \norm{a_k+w_{i-1}}^2 <\infty.
 \end{align*}
 It follows from the first above inequality that $\delta_k\to 0$ as
 $k\to \infty$.  This result, together with the third line of
 \eqref{eq:syn.si} proves item \ref{it:rsem.si}.  Item~\ref{it:ay.si}
 follows trivially form the second above inequality and the equality
 in \eqref{eq:s1a}.

 It follows from the update formulas for $z_k$ and $w_k$, and from
 \eqref{eq:syn.si} that
 \begin{align*}
   z_k-x_k+s_k=t_k\lambda(a_k+b_k),\qquad
   w_k-b_k+s_k=t_k(x_k-y_k).
 \end{align*}
 Squaring both sides of each one of these equalities and adding them
 we conclude that
 \begin{align*}
   \norm{z_k-x_k+s_k}^2+\norm{w_k-b_k+s_k}^2
   =t_k^2\rho_k.
 \end{align*}
 Since $\delta_k\to 0$ as $k\to \infty$, it follows from the above
 equations and the last inequality in~\eqref{eq:boundt.si} that 
 \begin{align*}
   t_k^2\rho_k\to 0,\quad   z_k-x_k+s_k\to 0,\quad
   w_k-b_k+s_k\to 0\qquad \text{as }k\to \infty.
 \end{align*}
 Item \ref{it:bx.si} follows from the above result and from
 item~\ref{it:rsem.si}.
\end{proof}

\begin{theorem}
  If $S_e(A,B)\neq \emptyset$, then
  $\set{(x_k,b_k)}$, $\set{(y_k,-a_k)}$ and $\set{(z_k,w_k)}$ converge
  weakly to a point in this set.
\end{theorem}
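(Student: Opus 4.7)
The plan is to mimic verbatim the proof of the analogous theorem for Algorithm~I, swapping in Lemma~\ref{lm:fej.si} and Lemma~\ref{lm:prec.si} at every place where Lemma~\ref{lm:fej} and Lemma~\ref{lm:prec} were invoked. First I would pick any weakly convergent subsequence $\{(z_{k_n},w_{k_n})\}$ with weak limit $(z,w)$. Items \ref{it:rsem.si}--\ref{it:bx.si} of Lemma~\ref{lm:prec.si} then immediately give
\begin{align*}
  x_{k_n}\tow z,\quad y_{k_n-1}\tow z,\quad
  b_{k_n}\tow w,\quad a_{k_n-1}\tow -w,\quad
  \mu_{k_n}\to 0 \quad(n\to\infty),
\end{align*}
together with $x_{k_n}-y_{k_n-1}\to 0$ and $a_{k_n-1}+b_{k_n}\to 0$.

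Next I would apply Lemma~\ref{lm:limit} to pass to the weak limit in the enlarged inclusions. For the $B$ side, $b_{k_n}\in B^{[\mu_{k_n}]}(x_{k_n})$ with $\mu_{k_n}\to 0$ yields $w\in B(z)$. For the $A$ side, the semi-inexact scheme solves the first subproblem exactly, so $a_{k_n-1}\in A(y_{k_n-1})=A^{[0]}(y_{k_n-1})$; Lemma~\ref{lm:limit} then gives $-w\in A(z)$. Hence $(z,w)\in S_e(A,B)$, so every weak cluster point of $\{(z_k,w_k)\}$ lies in this set, and equivalently every weak cluster point of $\{p_k\}$ lies in
\begin{align*}
  \Omega=\set{(z,\lambda w)\;:\;(z,w)\in S_e(A,B)}.
\end{align*}

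To upgrade this to a unique weak limit, I would combine the above with Fej\'er convergence: Lemma~\ref{lm:fej.si} shows that $\{p_k\}$ is Fej\'er convergent to $\Omega$, and $\Omega\neq\emptyset$ by assumption. Opial's Lemma then forces $\{p_k\}$ to have a single weak cluster point, which must belong to $\Omega$. Consequently $(z_k,\lambda w_k)\tow (z,\lambda w)$ for some $(z,w)\in S_e(A,B)$, and items \ref{it:ay.si} and \ref{it:bx.si} of Lemma~\ref{lm:prec.si} transfer this weak convergence to $\{(y_k,-a_k)\}$ and $\{(x_k,b_k)\}$, respectively.

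I do not anticipate a substantive obstacle: the Fej\'er/Opial skeleton is identical to the Algorithm~I case, and the only adjustment is the exactness of the $A$-subproblem, which simplifies rather than complicates the passage to the limit in the enlarged graph. The one point to state cleanly is the use of Lemma~\ref{lm:limit} with the degenerate parameter sequence $\varepsilon_k\equiv 0$ on the $A$ side, alongside $\mu_k\to 0$ on the $B$ side.
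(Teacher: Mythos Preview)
Your proposal is correct and follows the paper's proof essentially verbatim. Note only that Lemma~\ref{lm:limit} must be invoked once with $m=2$ (using the coupling hypotheses $x_{k_n}-y_{k_n-1}\to 0$ and $a_{k_n-1}+b_{k_n}\to 0$ that you already listed), not separately with $m=1$ for $A$ and $B$, since the latter would require \emph{strong} convergence of $b_{k_n}$ and $a_{k_n-1}$ individually; your phrasing is slightly ambiguous on this point but the ingredients you recorded are exactly those needed for the joint application.
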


\begin{proof}
  Suppose a subsequence $\set{(z_{k_n},w_{k_n})}$ converges
  weakly to some $(z,w)$.  It follows from
  Lemma~\ref{lm:prec.si} that
  \begin{align*}
  \begin{alignedat}{3}
    &x_{k_n}\tow  z, &\quad& y_{k_n-1}\tow z,\quad
    &&x_{k_n}-y_{k_n-1}\to 0,
    \\
    &b_{k_n}\tow w, && a_{k_n-1}\tow -w,&\quad& a_{k_n-1}+b_{k_n}\to 0,
    \\
    &\mu_{k_n}\to 0&&\varepsilon_{k_n-1}\to 0&&\qquad
    \end{alignedat}
  \qquad \text{as }k\to \infty.
\end{align*}
  Since $b_{k_n}\in B^{[\mu_{k_n}]}(x_{k_n})$,
  $a_{k_n-1}\in A(y_{k_n-1})$
  for $n=1,2,\dots$, it follows from Lemma~\ref{lm:limit} that
  $w\in B(z)$ and $-w\in A(w)$. Therefore, $(z,w)\in S_e(A,B)$.

  Define, again,
  \begin{align*}
    \Omega=\set{(z,\lambda w)\;:\;(z,w)\in S_e(A,B)}.
  \end{align*}
  We have proved that all weak limit points of $\set{(z_k,w_k)}$ are
  in $S_e(A,B)$.  Therefore, all weak limit points of
  $\set{p_k=(z_k,\lambda w_k)}$ are in $\Omega$.  In
  Lemma~\ref{lm:fej.si} we proved that $\set{p_k}$ is Fej\'er
  convergent to $\Omega$.  Since $\Omega\neq\emptyset$, it follows
  from these results and from Opial's Lemma~\cite{opial} that the
  bounded sequence $\set{p_k}$ has a unique weak limit point and such
  a point belongs to $\Omega$.  Therefore, $\set{p_k}$ converges
  weakly to a point in $(z,\lambda w)$ where $(z,w)\in S_e(A,B)$.  To
  end the proof, use items~\ref{it:ay.si} and \ref{it:bx.si} of
  Lemma~\ref{lm:prec.si}.
\end{proof}

\appendix

\section{Computability of step (2)}\label{sec:comp}

\begin{proposition}
  \label{pr:comp}
  Suppose that $T=A$ and $T=B$ have the following properties
  \begin{enumerate}
  \item for any $v,z\in H$, on can verify whether $v\in T(z)$ or
    $v\notin T(z)$;
  \item for any $c\in H$, on can generate sequences $v_i,z_i,\eta_i$ such
    that $v_i\in T^{[\eta_i]}(z_i)$ for all $i$ and
    \begin{align}
      \eta_i\to 0,\;
      \norm{\lambda v_i+y_i-c}\to 0\qquad
      \text{as }i\to \infty.
    \end{align}
  \end{enumerate}
  Then, step (1) of Algorithms {\bf I} and {\bf II} are computable.
\end{proposition}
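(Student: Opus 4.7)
The plan is to handle step~(1) via a case analysis on whether $(z_{k-1},w_{k-1})$ already lies in $S_e(A,B)$, since this is the only situation in which the right-hand side of the tolerance inequality in~\eqref{eq:s1} can vanish. First, I would apply property~1 to $T=A$ with $(v,z)=(-w_{k-1},z_{k-1})$ and to $T=B$ with $(v,z)=(w_{k-1},z_{k-1})$. If both inclusions hold, the choice $y_k=x_k=z_{k-1}$, $a_k=-w_{k-1}$, $b_k=w_{k-1}$, $\varepsilon_k=\mu_k=0$ makes both sides of~\eqref{eq:s1} vanish, and step~(1) is complete.

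In the complementary case, let $(y^*,a^*)$ and $(x^*,b^*)$ denote the exact solutions of the two proximal inclusion-equation systems driven by $c=z_{k-1}-\lambda w_{k-1}$ (for $A$) and by $c'=y^*+\lambda w_{k-1}$ (for $B$); these exist and are unique by maximal monotonicity. Combining the defining equalities $\lambda a^*+y^*=c$ and $\lambda b^*+x^*=c'$, one checks that $a^*+b^*=0$ together with $x^*=y^*$ would force $y^*=x^*=z_{k-1}$, $a^*=-w_{k-1}$ and $b^*=w_{k-1}$, contradicting the failure of at least one of the verifications above. Hence $\rho^*:=\norm{a^*+b^*}^2+\norm{x^*-y^*}^2>0$.

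Next I would invoke property~2 twice. Applied to $T=A$ with $c=z_{k-1}-\lambda w_{k-1}$, it produces triples $(a_i,y_i,\varepsilon_i)$ with $a_i\in A^{[\varepsilon_i]}(y_i)$, $\varepsilon_i\to 0$ and $\lambda a_i+y_i-c\to 0$; Proposition~\ref{pr:eb} then forces $(a_i,y_i)\to(a^*,y^*)$. For each such $i$, applied to $T=B$ with $c'_i=y_i+\lambda w_{k-1}$, it produces $(b_{i,j},x_{i,j},\mu_{i,j})\to(b^*_i,x^*_i)$ as $j\to\infty$, where $(b^*_i,x^*_i)$ is the exact solution of the second subproblem driven by $y_i$; non-expansiveness of the resolvent of $B$ then gives $(b^*_i,x^*_i)\to(b^*,x^*)$ as $i\to\infty$. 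Letting $j\to\infty$ first and then $i\to\infty$, the left-hand side of~\eqref{eq:s1} tends to $0$, while the right-hand side tends to $\tfrac{\sigma^2}{4}\rho^*>0$. Consequently, after finitely many refinements, the tolerance is verifiably satisfied by direct evaluation of both sides from the produced data.

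The treatment of Algorithm~\textbf{II} follows the same template: the inequality in~\eqref{eq:s1b} now involves only $\norm{s_k}^2+2\lambda\mu_k$ on the left, and the same two-stage limit argument applies once the first subproblem has been solved exactly. The main obstacle in either case is certifying that the right-hand side of the tolerance stays bounded away from zero at the generic iteration, and this is exactly what the case analysis guided by property~1 accomplishes.
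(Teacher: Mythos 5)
Your proposal is correct and follows essentially the same route as the paper's proof: the same case analysis driven by property~1, the same contradiction argument showing the exact proximal pair has $\norm{\hat a+\hat b}^2+\norm{\hat x-\hat y}^2>0$, and the same limiting argument showing the left side of \eqref{eq:s1} can be driven to zero while the right side stays bounded away from zero. The only cosmetic difference is that you organize the approximations as a nested double sequence and control the second subproblem via nonexpansiveness of the resolvent of $B$, whereas the paper uses a single index and a second application of Proposition~\ref{pr:eb} with the perturbed driving point; both work.
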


\begin{proof}
  It suffices to consider one iteration of Algorithm {\bf I}.
  Assume that we are at iteration $k$ of Algorithm {\bf I}.
  If $-w_{k-1}\in A(z_{k-1})$ and $w_{k-1}\in B(z_{k-1})$
  then
  \begin{align*}
    (a_k,y_k,\varepsilon_k)=(-w_{k-1},z_{k-1},0),\qquad
    (b_k,x_k,\mu_k)=(w_{k-1},z_{k-1},0)
  \end{align*}
  trivially satisfies criterion~\eqref{eq:s1}.

  Suppose $-w_{k-1}\notin A(z_{k-1})$ or $w_{k-1}\notin B(z_{k-1})$
  and let
  \begin{alignat*}{2}
    \hat{y}&=(I+\lambda A)^{-1}(z_{k-1}-\lambda w_{k-1}),&\quad
    \hat a&=\lambda^{-1}(z_{k-1}-\lambda w_{k-1}-\hat{y}),
    \\
    \hat{x}&=(I+\lambda B)^{-1}(\hat{y}+\lambda w_{k-1}),&\quad
    \hat b&=\lambda^{-1}(\hat{y}+\lambda w_{k-1}-\hat{x}).
  \end{alignat*}
  It follows from these definitions that
    $\hat a\in A(\hat y)$, $\hat b\in B(\hat x)$, and
    \begin{equation*}
    \lambda \hat a+\hat y=z_{k-1}-\lambda w_{k-1},\qquad
     \lambda \hat b+\hat x=\hat y+\lambda w_{k-1}.
   \end{equation*}
  If $\hat a+\hat b=\hat x-\hat y=0$, then it follows from the  above
  equalities that
  $w_{k-1}=\hat b=-\hat a$,
  $z_{k-1}=\hat x=\hat y$ and, consequently,
  $-w_{k-1}\in A(z_{k-1})$ and
  $w_{k-1}\in B(z_{k-1})$, in contradiction with our assumption.  
  Therefore, $\hat a+\hat b\neq 0$ or $\hat x-\hat y\neq 0$.
  
  In view of the assumption of the proposition, one can generate sequences
  $\set{(a_{k,j},y_j,\varepsilon_{k,j})}_{j\in \N}$ and
  $\set{(b_{k,j},x_{k,j},\mu_{k,j})}$ such that
  \begin{align*}
    a_{k,j}\in A^{[\varepsilon_{k,j}]}(y_{k,j}),
    \varepsilon_{k,j}\leq\dfrac{1}{2\lambda j^2},
    \norm{\lambda a_{k,j}+y_{k,j}-(z_{k-1}+\lambda w_{k-1})}
    \leq \dfrac{1}{j}
    \\
        b_{k,j}\in B^{[\mu_{k,j}]}(x_{k,j}),
    \mu_{k,j}\leq\dfrac{1}{2\lambda j^2},
    \norm{\lambda b_{k,j}+x_{k,j}-(y_{k-1}+\lambda w_{k-1})}
    \leq \dfrac{1}{j}
  \end{align*}
  It follows from the above relations and from Proposition~\ref{pr:eb}
  that
  \begin{equation*}
    \norm{\lambda (a_{k_j}-\hat a)}^2+\norm{y_{k_j}-\hat y}^2
    \leq \dfrac{2}{j^2}.
  \end{equation*}
  In particular $\norm{y_{k,j}-\hat y}\leq 2/j$ and
  \begin{equation*}
    \norm{\lambda b_{k,j}+x_{k,j}-(\hat y+\lambda w_{k-1})}\leq\dfrac{3}{j}
    .
  \end{equation*}
  Using again~Proposition~\ref{pr:eb} we conclude that
  \begin{equation*}
    \norm{b_{k,j}-\hat b}^2+\norm{x_{k,j}-\hat x}\leq\dfrac{10}{j^2}
  \end{equation*}
  Therefore,
  \begin{align*}
    &y_{k,j}\to \hat y,\quad a_{k,j}\to \hat a,\\
    &x_{k,j}\to \hat x,\quad b_{k,j}\to \hat b\qquad \text{as }
      j\to\infty.
  \end{align*}
  and $\norm{\lambda (a_{k,j}-b_{k,j})}^2+\norm{x_{k,j}-y_{k,j}}^2
  \to \norm{\lambda(\hat a+\hat b)}^2+\norm{\hat x-\hat y}^2> 0$
  as $j\to\infty$.
  Since
  \begin{align*}
  \norm{\lambda a_{k,j}+y_{k,j}-(z_{k-1}+\lambda w_{k-1})}^2
  +
  \norm{\lambda b_{k,j}+x_{k,j}-(y_{k-1}+\lambda w_{k-1})}
    +2\lambda(\varepsilon_{k,j}+\mu_{k,j})
    \leq \dfrac{4}{j^2},
  \end{align*}
  for $j$ large enough criterion~\eqref{eq:s1} will be satisfied.
\end{proof}

\section{A technical Lemma}
\label{sec:tech}

Let $X$ be a real Banach space with topological dual $X^*$
and let $\inner{x}{x^*}$ stands for the duality product
$x^*(x)$ for $x\in X$ and $x^*\in X^*$.
A point-to-set operator $T:X\tos X^*$ is monotone if
\begin{align*}
  \inner{x-y}{x^*-y^*}\geq 0\qquad \forall x, y\in X, \,
  x^*\in T(x), \, y^*\in T(y).
\end{align*}
A monotone operator $T$ is maximal monotone if it is monotone and its
graph is maximal in the family of the graphs of monotone operators.

The \emph{$\varepsilon$-enlargement} of a maximal monotone operator
$T$ in $X$ is defined as
\begin{align*}
  T^{[\varepsilon]}(x)=\set{
  x^*\in X^*\;:\;
  \inner{x-y}{x^*-y^*}\geq -\varepsilon
  }\qquad
  \qquad \varepsilon\geq 0,\;x\in X.
\end{align*}
\emph{Fitzpatrick}~\cite{MR1009594} function $\varphi$ associated with
a maximal monotone operator $T:X\tos X^*$ is defined as
\begin{align}
  \label{eq:fitz}
  \varphi:X \times X^*\to\br, \quad
  \varphi(x,x^*)
  & =\sup_{y^*\in T(y)}
    \inner{x}{y^*}+\inner{y}{x^*}-\inner{y}{y^*}.
\end{align}

\begin{lemma}
  \label{lm:fitz}
  Suppose $T$ is a maximal monotone operator in $X$ ad let
  $\varphi$ be its Fitzpatrick function. Then
  \begin{enumerate}
  \item\label{it:f1}
    $\varphi$ is convex and lower semicontinuous in
    the weak$\times$weak-$*$ topology of $X\times X^*$;
  \item\label{it:f2}
    $\inner{x}{v}\leq \varphi(x,v)$;
  \item\label{it:f3}
    $v\in T(x)\iff \inner{x}{v}=\varphi(x,v)$;
  \item\label{it:f4} $v\in T^{[\varepsilon]}(x)\iff \varphi(x,v)\leq
    \inner{x}{v}+\varepsilon$;
  \end{enumerate}
\end{lemma}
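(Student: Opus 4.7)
The approach is to recast the Fitzpatrick function in a more convenient form before tackling the four claims. A direct rearrangement of \eqref{eq:fitz} gives
\begin{align*}
  \varphi(x,v) - \inner{x}{v}
  = \sup_{(y,y^*)\in\Graph T}\bigl[-\inner{x-y}{v-y^*}\bigr]
  = -\inf_{(y,y^*)\in\Graph T}\inner{x-y}{v-y^*},
\end{align*}
so items \ref{it:f2}, \ref{it:f3} and \ref{it:f4} all reduce to statements about the sign of $\inner{x-y}{v-y^*}$ as $(y,y^*)$ ranges over $\Graph T$.

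For item \ref{it:f1}, I would note that for each fixed $(y,y^*)\in\Graph T$ the map $(x,v)\mapsto \inner{x}{y^*}+\inner{y}{v}-\inner{y}{y^*}$ is affine and continuous in the weak$\times$weak-$*$ topology of $X\times X^*$. Hence $\varphi$ is a pointwise supremum of such functions, and is therefore convex and weak$\times$weak-$*$ lower semicontinuous.

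Item \ref{it:f4} is then essentially a restatement of the definition of $T^{[\varepsilon]}$: using the rearrangement, $\varphi(x,v)\leq \inner{x}{v}+\varepsilon$ is equivalent to $\inner{x-y}{v-y^*}\geq -\varepsilon$ for every $(y,y^*)\in\Graph T$, which is exactly the condition $v\in T^{[\varepsilon]}(x)$. Item \ref{it:f3} then follows by combining item \ref{it:f4} at $\varepsilon=0$, which identifies the inequality $\varphi(x,v)\leq \inner{x}{v}$ with $v\in T(x)$, and item \ref{it:f2}, which always furnishes the reverse inequality.

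The only genuinely non-trivial step is item \ref{it:f2}. Here the plan is to argue by contrapositive: if $\varphi(x,v)<\inner{x}{v}$, the displayed infimum is strictly positive, so $\inner{x-y}{v-y^*}>0$ for every $(y,y^*)\in\Graph T$. Maximal monotonicity of $T$ then forces $(x,v)\in\Graph T$, which contradicts that strict positivity when one plugs in $(y,y^*)=(x,v)$. This is the sole place where maximality of $T$, rather than mere monotonicity, is essential, and it is the main conceptual ingredient of an otherwise purely algebraic argument.
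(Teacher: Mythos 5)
Your proposal is correct, and for the one item the paper actually proves --- item \ref{it:f4} --- it coincides with the paper's argument: both rearrange the definition of $\varphi$ into $\varphi(x,v)-\inner{x}{v}=\sup_{y^*\in T(y)}\inner{x-y}{y^*-v}$ and read off the equivalence with the definition of $T^{[\varepsilon]}$. The difference is one of scope: the paper simply cites Fitzpatrick's original article for items \ref{it:f1}--\ref{it:f3} and an earlier paper of the author for item \ref{it:f4}, whereas you supply full proofs of all four items. Your arguments for items \ref{it:f1} and \ref{it:f2} are the standard ones and are sound (the supremum of weak$\times$weak-$*$ continuous affine functions for \ref{it:f1}; the contrapositive via maximality for \ref{it:f2}), so your write-up is more self-contained. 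One small imprecision: item \ref{it:f4} at $\varepsilon=0$ characterizes membership in $T^{[0]}(x)$, not in $T(x)$, so your derivation of item \ref{it:f3} tacitly uses $T^{[0]}=T$ (Proposition~\ref{pr:teps}, item~\ref{it:0teps}), which itself rests on maximality; consequently your remark that item \ref{it:f2} is the \emph{sole} place where maximality enters is not quite accurate, though this does not affect the validity of the proof.
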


\begin{proof}
  Items \ref{it:f1}, \ref{it:f2}, and \ref{it:f3}
  where proved in~\cite{MR1009594}.

  Item~\ref{it:f4} was proved in \cite{MR1802238}. For the sake of completeness
  we present a proof.
  If follows from~\eqref{eq:fitz} that
  \begin{align*}
    \varphi(x,x^*)-\inner{x}{x^*}=\sup_{y^*\in T(y)}
                 \inner{x-y}{y^*-x^*}.
  \end{align*}
  To end the proof, use \eqref{eq:teps} to write
  \begin{align*}
    x^*\in T^{[\varepsilon]}(x)\iff \varepsilon \geq
    \inner{x-y}{y^*-x^*}\;\;\forall y\in X,\;y^*\in T(y)
  \end{align*}
  and combine the two above equations.
\end{proof}

The next technical lemma will be used in the convergence analysis of the
inexact Douglas-Rachford method proposed in this work.

\begin{lemma}
  \label{lm:limit}
  Let $X$ be a real Banach space.
  If $T_1,\dots,T_m:X\tos X^*$ are maximal monotone
  operators,  $    v_{i,k}\in T^{[\varepsilon_{i,k}]}(x_{i,k})$
  for $i=1,\dots,m$ and $k=1,2,\dots$,
  and
  \begin{alignat*}{2}
    &x_{i,k}-x_{j,k} \to 0\;\; \text{(for $i,j=1,\dots,m$)},
    \quad
    &&\sum_{i=1}^mv_{i,k}\to v,
    \\
    &x_{i,k}\tow
    x,
    \qquad\qquad
    v_{i,k}\tows v_i,
    \quad
    &&\varepsilon_{i,k}\to 0
    \;\; \text{(for $i=1,\dots,m$)},
  \end{alignat*}
  as $k\to \infty$,
  then $v_i\in T_i(x)$ for $i=1\dots,m$.
\end{lemma}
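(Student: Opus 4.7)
\textbf{Proof plan for Lemma~\ref{lm:limit}.} The plan is to use the characterization of the $\varepsilon$-enlargement via the Fitzpatrick function (Lemma~\ref{lm:fitz}\ref{it:f4}) together with its weak$\times$weak-$*$ lower semicontinuity (Lemma~\ref{lm:fitz}\ref{it:f1}) and the equality case in Fenchel-type inequality (Lemma~\ref{lm:fitz}\ref{it:f3}). Let $\varphi_i$ denote the Fitzpatrick function of $T_i$. For each $i$ and $k$ we have the inequality
\begin{equation*}
  \varphi_i(x_{i,k},v_{i,k})\leq \inner{x_{i,k}}{v_{i,k}}+\varepsilon_{i,k},
\end{equation*}
and the goal is to pass to the limit and obtain $\varphi_i(x,v_i)\leq\inner{x}{v_i}$ for each $i$; combined with the reverse inequality from Lemma~\ref{lm:fitz}\ref{it:f2}, Lemma~\ref{lm:fitz}\ref{it:f3} will give $v_i\in T_i(x)$.

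The first step is to sum over $i$ and then take $\liminf_k$ on both sides. On the left, weak$\times$weak-$*$ lower semicontinuity of each $\varphi_i$ and the finite-sum inequality $\liminf_k\sum_i\alpha_{i,k}\geq\sum_i\liminf_k\alpha_{i,k}$ yield
\begin{equation*}
  \sum_i\varphi_i(x,v_i)\leq\liminf_k\sum_i\varphi_i(x_{i,k},v_{i,k}).
\end{equation*}
On the right, $\sum_i\varepsilon_{i,k}\to 0$, so the remaining task is to evaluate $\lim_k\sum_i\inner{x_{i,k}}{v_{i,k}}$.

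This is the main obstacle, since we have only weak convergence of $\set{x_{i,k}}$ and only weak-$*$ convergence of each $\set{v_{i,k}}$, so the individual products $\inner{x_{i,k}}{v_{i,k}}$ need not converge. The trick is to pick one index, say $i=1$, and split
\begin{equation*}
  \sum_{i=1}^m\inner{x_{i,k}}{v_{i,k}}
  =\Inner{x_{1,k}}{\sum_{i=1}^m v_{i,k}}
  +\sum_{i=1}^m\inner{x_{i,k}-x_{1,k}}{v_{i,k}}.
\end{equation*}
The uniform boundedness principle implies that each weak-$*$ convergent sequence $\set{v_{i,k}}$ is norm-bounded, so the hypothesis $x_{i,k}-x_{1,k}\to 0$ forces the second sum to $0$. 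The first term is a duality pairing between $x_{1,k}\tow x$ and $\sum_i v_{i,k}\to v$ (strongly), hence converges to $\inner{x}{v}$. Finally $v=\sum_i v_i$, because strong and weak-$*$ limits of $\sum_i v_{i,k}$ must coincide, giving $\inner{x}{v}=\sum_i\inner{x}{v_i}$.

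Putting these together,
\begin{equation*}
  \sum_{i=1}^m\bigl[\varphi_i(x,v_i)-\inner{x}{v_i}\bigr]\leq 0,
\end{equation*}
while Lemma~\ref{lm:fitz}\ref{it:f2} makes every summand nonnegative. Hence each bracket vanishes and Lemma~\ref{lm:fitz}\ref{it:f3} gives $v_i\in T_i(x)$ for every $i$, as required.
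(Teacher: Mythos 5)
Your proposal is correct and follows essentially the same route as the paper: the Fitzpatrick-function characterization of the enlargement, the splitting of $\sum_i\inner{x_{i,k}}{v_{i,k}}$ through the index $i=1$, weak$\times$weak-$*$ lower semicontinuity to pass to the limit, and the nonnegativity of each summand $\varphi_i(x,v_i)-\inner{x}{v_i}$ to conclude. No gaps.
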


\begin{proof}
  Let $\varphi_i$ be Fitzpatrick's function of $T_i$ for $i=1\dots,m$.
  Since $v_{i,k}\in T^{[\varepsilon_{i,k}]}(x_{i,k})$,
  \begin{align*}
    \varphi_i(x_{i,k},v_{i,k})
    \leq \inner{x_{i,k}}{v_{i,k}}
    +\varepsilon_{i,k}
    \qquad
    i=1,\dots,m.
  \end{align*}
  Adding these inequalities for $i=1,\dots,m$ we obtain, after
  trivial algebraic manipulations the inequality
  \begin{align*}
    \sum_{i=1}^m
    \varphi_i(x_{i,k},v_{i,k})
    \leq
    \inner{x_{1,k}}{v}
    +\Inner{x_{1,k}}{\sum_{i=1}^mv_{i,k}-v}
    +\sum_{i=1}^m
    \inner{x_{i,k}-x_{1,k}}{v_{i,k}}
    +\varepsilon_{i,k}\,.
  \end{align*}
  Each $\varphi_i$ is lower semicontinuous in the weak$\times$weak-$*$
  topology of $X\times X^*$.  Moreover, it follows trivially from the
  assumptions of the lemma that the sequences $\set{x_{i,k}}$ and
  $\set{v_{i,k}}$ are bounded for $i=1,\dots,m$.  Therefore, taking
  the $\liminf$ as $k\to\infty$ at both sides of the above inequality
  and using the lower semicontinuity of $\varphi$ we conclude that
  \begin{align*}
    \sum_{i=1}^m
    \varphi_i(x,v_i)\leq \inner{x}{v}.
  \end{align*}
  It also follows trivially from the assumptions of the lemma that
  $\sum_{i=1}^mv_i=v$. So, we can write this above inequality as
  \begin{align*}
    0\geq\sum_{i=1}^m\varphi_i(x,v_i)-\inner{x}{v_i}.
  \end{align*}
  Since all terms of this sum are non-negative, each one is equal to
  $0$ and the conclusion follows from item~\ref{it:f3} of
  Lemma~\ref{lm:fitz}.
\end{proof}

\end{document}